\newtheorem{proposition}{Proposition}[section]
\newtheorem{fact}{Fact}[section]
\tikzstyle{block} = [draw, fill=blue!20, rectangle, 
\tikzstyle{sum} = [draw, fill=blue!20, circle, node distance=1cm]
\tikzstyle{input} = [coordinate]
\tikzstyle{output} = [coordinate]
\tikzstyle{pinstyle} = [pin edge={to-,thin,black}]
\tikzstyle{bigblock} = [draw, fill=blue!20, rectangle, 
\tikzstyle{medblock} = [draw, fill=blue!20, rectangle, 
\tikzstyle{mux} = [draw, fill=black!20, rectangle, 
\tikzstyle{smallblock} = [draw, fill=blue!20, rectangle, 
\tikzstyle{sum} = [draw, fill=blue!20, circle, node distance=1cm]
\tikzstyle{signal} = [coordinate]
\tikzstyle{pinstyle} = [pin edge={to-,thin,black}]
\tikzstyle{block} = [draw, fill=blue!20, rectangle, 
\tikzstyle{blockS} = [draw, fill=blue!20, rectangle, 
\tikzstyle{input} = [coordinate]
\tikzstyle{output} = [coordinate]
\tikzset{add/.style n args={4}{
    minimum width=1mm,
    path picture={
        \draw[black, thick] 
            (path picture bounding box.south east) -- (path picture bounding box.north west)
            (path picture bounding box.south west) -- (path picture bounding box.north east);
        }
    }
}
\tikzset{Frame_into/.pic={
        code={\tikzset{scale=1}
        \tikzmath
            {
                \l  = 1;
                \Rc = 0.15;
            } 
        \draw [thick,->] (0, 0) -- +(\l, 0);
        \draw [thick,->] (0, 0) -- +(0, \l);
        % \node[circle, fill=white,black, minimum size=3pt] (circ) at (0,0) {} ; 
        % \draw [thick, black, fill=white] (0,0) circle [radius=.05];
        \draw [thick, fill=white] 
			    (0,0) circle [radius=\Rc];
	    \draw [thick] ({\Rc*cos(45)}, {0\Rc*sin(45)}) 
	               -- ({\Rc*cos(225)}, {0\Rc*sin(225)});
        \draw [thick] ({\Rc*cos(135)}, {0\Rc*sin(135)}) 
	               -- ({\Rc*cos(315)}, {0\Rc*sin(315)});
  }}
}
\tikzset{Frame_outof/.pic={
        code={\tikzset{scale=1}
        \tikzmath
            {
                \l  = 1;
                \Rc = 0.15;
            } 
        \draw [thick,->] (0, 0) -- +(\l, 0);
        \draw [thick,->] (0, 0) -- +(0, \l);
        % \node[circle, fill=white,black, minimum size=3pt] (circ) at (0,0) {} ; 
        % \draw [thick, black, fill=white] (0,0) circle [radius=.05];
        \draw [thick, fill=white] 
			    (0,0) circle [radius=\Rc];
	    \draw [thick, fill=black] 
			    (0,0) circle [radius={0.2*\Rc}];
  }}
}
\newcommand{\bc}{\begin{center}}
\newcommand{\ec}{\end{center}}
\newcommand{\benum}{\begin{enumerate}}
\newcommand{\eenum}{\end{enumerate}}
\newcommand{\nn}{\nonumber}
\newcommand{\matl}{\left[ \begin{array}}
\newcommand{\matr}{\end{array} \right]}
\newcommand{\matls}{\left[ \begin{smallmatrix}}
\newcommand{\matrs}{\end{smallmatrix} \right]}
\newcommand{\isdef}{\stackrel{\triangle}{=}}
\newcommand{\vect}[1]{\overset{\rightharpoonup}{#1}}
\newcommand{\tr}{{\rm tr}\,}
\newcommand{\rmA}{{\rm A}}
\newcommand{\rmB}{{\rm B}}
\newcommand{\rmT}{{\rm T}}
\newcommand{\rmd}{{\rm d}}
\newcommand{\rmf}{{\rm f}}
\newcommand{\rmi}{{\rm i}}
\newcommand{\rmm}{{\rm m}}
\newcommand{\rmp}{{\rm p}}
\newcommand{\BBR}{{\mathbb R}}
\newcommand{\SB}{{\mathcal B}}
\newcommand{\SN}{{\mathcal N}}
\newcommand{\SO}{{\mathcal O}}
\newcommand{\shiftq}{{\textbf{\textrm{q}}}}
\renewcommand{\matl}{\begin{bmatrix}}
\renewcommand{\matr}{\end{bmatrix} }
\newcommand{\resolve}[2]{\ensuremath{\left.{#1}\right|_{\rm #2}}}
\newcommand{\ihat}[1]{\ensuremath{\hat \imath_{\rm #1} }}
\newcommand{\jhat}[1]{\ensuremath{\hat \jmath_{\rm #1} }}
\newcommand{\khat}[1]{\ensuremath{\hat k_{\rm #1} }}
\newcommand{\resolvedin}[1]{{\left. \right|_{\rm #1}}}
\newacronym{nn}{NN}{Neural Network}
\newacronym{gd}{GD}{Gradient Descent}
\newacronym{fs}{FS}{Fsolve}
\newacronym{rsm}{RSM}{Random Search Method}
\newacronym{trd}{TRD}{Trust-Region-Dogleg}
\tikzstyle{rectRound} = [rectangle, rounded corners, text centered, draw=black, minimum width=2em]
\tikzstyle{rect} = [rectangle, text centered, draw=black, minimum width=2em]
\tikzstyle{clear} = [draw=none, text centered, minimum width=2em]
\tikzstyle{circ} = [circle, text centered, draw=black, minimum width = 2em]
\tikzstyle{circClear} = [circle, text centered, minimum width = 2em]
\tikzstyle{arrow} = [thick, -latex]
\DeclareRobustCommand{\rvdots}{%
  \vbox{
    \baselineskip4\p@\lineskiplimit\z@
    \kern-\p@
    \hbox{.}\hbox{.}\hbox{.}
  }}
\title{\LARGE \bf
Longitudinal Flight Dynamics Control Based on Feedback Linearization and Normal Canonical Form
}
\title{\LARGE \bf
Data-driven, Model-free, Coordinate-free,  Adaptive Attitude Estimation
}
\title{Multiplicative, Retrospective Cost Adaptive Attitude Estimation}
\title{Multiplicative Adaptive Attitude Estimation with Unknown Gyro Bias}
\title{Data-driven Attitude Filtering with Unknown Gyro Bias}
\title{Retrospective Cost Attitude Estimation \\ with Noisy Measurements and Unknown Gyro Bias}
\title{Learning-based Attitude Estimation \\ with Noisy Measurements and Unknown Gyro Bias}
\author{Parham Oveissi, Mohammad Mirtaba, Ankit Goel% 
\thanks{Parham Oveissi is a graduate student in the Department of Mechanical Engineering, University of Maryland, Baltimore County, 1000 Hilltop Circle, Baltimore, MD 21250. {\tt\small parhamo1@umbc.edu}}%
\thanks{Mohammad Mirtaba is a graduate student in the Department of Mechanical Engineering, University of Maryland, Baltimore County, 1000 Hilltop Circle, Baltimore, MD 21250. {\tt\small mmirtaba@umbc.edu}}%
\thanks{Ankit Goel is an Assistant Professor in the Department of Mechanical Engineering, University of Maryland, Baltimore County,1000 Hilltop Circle, Baltimore, MD 21250. {\tt\small ankgoel@umbc.edu }}%
}
\begin{document}

\maketitle
% \thispagestyle{empty}
% \pagestyle{empty}

%%%%%%%%%%%%%%%%%%%%%%%%%%%%%%%%%%%%%%%%%%%%%%%%%%%%%%%%%%%%%%%%%%%%%%%%%%%%%%%%
\begin{abstract}
% Attitude estimation is pivotal in fields such as aerospace engineering, robotics, computer vision, and augmented reality, offering precise orientation information crucial for navigation and control.
%
% Despite being a subset of the broader state estimation problem, attitude estimation poses unique challenges due to the complex geometric structure of attitude parameterization. 
This paper introduces a learning-based, data-driven attitude estimator, called the retrospective cost attitude estimator (RCAE), for the SO(3) attitude representation. 
RCAE is motivated by the multiplicative extended Kalman filter (MEKF). However, unlike MEKF, which requires computing a Jacobian to compute the correction signal, RCAC uses retrospective cost optimization that depends only on the measured data. 
Moreover, due to the structure of the correction signal, RCAE does not require explicit estimation of gyro bias. 
The performance of RCAE is verified and compared with MEKF through both numerical simulations and physical experiments.

% Unlike traditional methods such as the multiplicative extended Kalman filter, which rely on Jacobian-derived correction gains, RCAE leverages a learning-based approach.
% Specifically, it computes a multiplicative correction signal through retrospective cost optimization using measured data, enhancing robustness and accuracy in dynamic environments. 
%
% We validate the performance of RCAE through comprehensive numerical simulations and experimental trials, demonstrating its efficacy in handling scenarios with noisy attitude measurements and biased rate-gyro data. This innovative approach paves the way for more reliable and efficient attitude estimation, pushing the boundaries of what's possible in real-world applications.
% 

% 
\end{abstract}
% \keywords{one, two, three, four}
\textit{\bf keywords:} attitude estimation, attitude filtering, Kalman filter, adaptive estimation, learning-based estimation, data-driven attitude estimation.

%%%%%%%%%%%%%%%%%%%%%%%%%%%%%%%%%%%%%%%%%%%%%%%%%%%%%%%%%%%%%%%%%%%%%%%%%%%%%%%%
\section{Introduction}
Attitude estimation, also known as orientation estimation, is a pivotal technology widely used in applications across several domains, including aerospace, robotics, computer vision, and navigation. The primary objective of attitude estimation is to ascertain the orientation of an object in three-dimensional space relative to a reference frame. In many real-world applications, attitude estimation is a critical enabling technology. For example, it is essential in inertial navigation systems, enabling vehicles to navigate in unknown environments \cite{goel2021introduction, mirtaba2023design}. Robotic agents use attitude estimation to comprehend their spatial orientation, facilitating effective interaction with their surroundings \cite{roumeliotis1999smoother}. Furthermore, with advancements in computational technology and resources, attitude estimation is crucial for augmented reality experiences, 3D reconstruction, and object tracking \cite{xu2019multi}.

Attitude estimation is a special case of the state estimation problem in dynamic systems.
In general, state estimation techniques involve the propagation of the state estimate using a model of the dynamic system and a correction of the state estimate using a system measurement.
Kalman filter is the most well-known state estimation technique that applies to linear systems \cite{simon2006optimal, chui2017kalman}. 
Various extensions of the Kalman filter for nonlinear systems have been developed, such as the Extended Kalman Filter (EKF), Ensemble Kalman Filter (EnKF), Unscented Kalman Filter (UKF), and Particle filters \cite{sarkka2023bayesian}.

% Due to the nonlinearity of attitude dynamics, the attitude estimation problem is a nonlinear estimation problem. 
% The fundamental difficulty in the attitude estimation problem, however,
In addition to the nonlinearity of the attitude dynamics, the attitude estimation problem is further complicated by the special geometric structure of the mathematical representation of the attitude. 
The attitude of a rigid body can be parameterized by a $3\times 3$ direction cosine matrix (DCM), a four-dimensional quaternion vector, or three Euler angles, etc. \cite{kasdin2011engineering}. 
Each of these parameterizations entails some trade-offs.
The DCM, an element of $\rm SO(3),$ is unique for a given attitude but consists of nine real numbers. 
A quaternion, an element of $S^3,$ consists of four real numbers but suffers from non-uniqueness. 
Euler angles uniquely determine the attitude but suffer from singularities, known as the gimbal lock problem \cite{goel2021introduction}.
% 
% Due to the uniqueness of the DCM, we consider the attitude estimation problem with the $3\times3$ orthonormal matrices.  

Variations of the Kalman filter have been applied to the attitude estimation problem with DCM and quaternion parameterization. However, the additive step in the Kalman filter, also known as the \textit{data assimilation} step, often produces an estimate that violates the geometric structure of the representation \cite{lefferts1982kalman, crassidis2007survey}. \cite{markley2003attitude} proposed the Multiplicative Extended Kalman Filter (MEKF) with quaternion parameterization, replacing the additive state correction step with a multiplicative step, thus preserving the geometric structure of the quaternion. In \cite{markley2006attitude, chatuverdi2011rigid, de2014so}, the multiplicative technique was developed for the $3\times 3$ DCM.

In all of these techniques, the computation of the Kalman gain is computationally expensive, requiring the propagation of the corresponding covariance matrix \cite{oveissi2024novel}. To address this, we present a novel attitude estimator called the Retrospective Cost Attitude Estimator (RCAE). 
More details on MEKF can be found in  \cite{markley2003attitude, markley2006attitude, lefferts1982kalman}.

Like the MEKF, RCAE corrects the attitude estimate with a multiplicative correction, preserving the geometric structure of the attitude parameterization. However, unlike MEKF, the multiplicative correction signal in RCAE is computed using a learning-based approach involving retrospective cost optimization and measured data.
% 
% Learning-based estimation techniques have gained significant attention in recent years. For instance, in \cite{learningbased1}, a deep neural network is trained to model the measurement noise associated with sensors. In \cite{learningbased2}, the authors train a dilated convolutional network to estimate gyroscope bias and perform open-loop time integration on the corrected measurements. In \cite{learningbased3}, an LSTM neural network is trained to estimate Euler angles using raw IMU measurements as input. In \cite{learningbased4}, a neural network predicts the optimal fusion weights of a complementary filter, making the structure adaptive instead of using fixed gains for the accelerometer.
% 
RCAE leverages the retrospective cost optimization technique \cite{santillo2010adaptive} and is motivated by retrospective cost adaptive control (RCAC). RCAC is a data-driven, learning-based adaptive control technique applicable to stabilization, tracking, and disturbance rejection problems and has been applied to various engineering problems \cite{oveissi2024adaptive, goel2021experimental, chee2024performance, oveissi2023learning, poudel2023learning}. 
RCAE is computationally efficient since the attitude correction signal is scalar instead of a vector, as in MEKF. 
Furthermore, the correction signal is updated by a recursive retrospective cost optimization algorithm which is driven by measured data, eliminating the need for Jacobian computations. 
Additionally, unlike Kalman filter-based techniques, where an unknown gyro bias requires explicit estimation, RCAE can reject the effect of unknown gyro bias without modifying the algorithm.

The paper is organized as follows. 
Section \ref{sec:problem} formulates the attitude estimation problem. 
The retrospective cost attitude estimator is presented in Section \ref{sec:RCAE}.
In Section \ref{sec:simulation}, RCAE is implemented and validated in a numerical experiment as well as a physical experimental setup. 
Finally, the paper concludes in Section \ref{sec:conclusion} with a discussion of future work.

\section{Problem Formulation}
\label{sec:problem}
Let ${\rm F_A} = (\hat \imath_\rmA, \hat \jmath_\rmA, \hat k_\rmA)$ be an inertial frame, where 
$\hat \imath_\rmA, \hat \jmath_\rmA, \hat k_\rmA$ are mutually orthonormal vectors.
Let $\SB$ be a rigid body and let $\rm F_B = (\hat \imath_\rmB, \hat \jmath_\rmB, \hat k_\rmB)$ be a frame fixed to $\SB.$
% 
% Let $\rm F_A$ be an orthonormal, right-handed frames. 
Let the \textit{orientation matrix} $\SO_{\rm B/A}(t) \in \rm SO(3) \subset \BBR^{3\times 3}$ denote the attitude of $\rm F_B$ relative to $\rm F_A$ at time $t.$
Note that the orientation matrix $\SO_{\rm B/A}$ is the transpose of the direction cosine matrix. 
% Note that $\SO_{\rm B/A}(t) \in \rm SO(3).$
Letting $\omega_{\rm B/A|B}(t) \in \BBR^3$ denote that angular velocity vector of $\rm F_B$ relative $\rm F_A$ with respect to the frame $\rm F_B$ at time $t,$ it follows from the Poisson's equation \cite{kasdin2011engineering} that the orientation matrix satisfies
% 
% The angular velocity $\omega = \vect \omega_{\rm B/A} \Big|_\rmB $ and the orientation matrix $\SO_{\rm B/A}$ satisfies the Poisson's equation
\begin{align}
    \dot \SO_{\rm B/A} (t)
        =
            - \omega_{\rm B/A|B}(t)^\times \SO_{\rm B/A}(t).
    \label{eq:poissons_equation1}
\end{align}
where, for $x \in \BBR^3,$ 
\begin{align}
    x^\times 
        \isdef
            \matl 
                0 & -x_3 & x_2 \\
                x_3 & 0 & -x_1 \\
                -x_2 & x_1 & 0
            \matr.
\end{align}

% Note that 
% \begin{align}
%     \dot \SO_{\rm A/B} 
%         =
%             \SO_{\rm A/B} \omega^\times.
%     \label{eq:poissons_equation2}
% \end{align}
% we can solve \eqref{eq:poissons_equation1} as
The orientation of $\SB$ can thus be obtained by integrating the Poisson's equation \eqref{eq:poissons_equation1}, which yields
\begin{align}
    \SO_{\rm B/A}(t) 
        =
             e^{-\int_{0}^{t}  \omega_{\rm B/A|B} (t)^\times \,\rmd t}
             \SO_{\rm B/A}(0).
    \label{eq:poissons_equation_solution_cont}
\end{align}
Note that if the angular velocity vector is time-varying, the computation of the integral in \eqref{eq:poissons_equation_solution_cont} is an intractable problem. 
Several integration techniques to integrate \eqref{eq:poissons_equation1} are discussed in \cite{hairer2006geometric,betsch2009rigid,zhao2013novel,leyendecker2008variational,sabatini2005quaternion,whitmore2000closed}.
However, since the attitude dynamics, given by \eqref{eq:poissons_equation1}, lacks dissipation, if the initial orientation $\SO_{\rm B/A}(0)$ is not precisely known, then the error in the orientation matrix computed by integrating \eqref{eq:poissons_equation1} does not converge to zero.

Instead of continuous-time integration, which may be intractable, the orientation matrix can be discretely propagated, as shown below. 
If, for $\tau \in [t, t+\Delta t], $ $\omega_{\rm B/A|B} (\tau) = \omega \hat n,$ where $\omega \in \BBR$ and $\hat n \in \BBR^3,$ then
\begin{align}
    \SO_{\rm B/A}(t+\Delta t) 
        =
             e^{-\omega \Delta t \hat n ^\times }
             \SO_{\rm B/A}(t).
    \label{eq:poissons_equation_solution_DT}
\end{align}
In the case where the direction of the angular velocity vector changes slowly or the time step $\Delta t$ is sufficiently small, the orientation of $\SB$ can be computed using \eqref{eq:poissons_equation_solution_DT}.
However, the computed orientation matrix may not necessarily represent the attitude of $\SB$ due to discretization error in numerical integration. 
% 
% 
% Furthermore, measurement noise in the angular velocity vector may also contribute to the errors in the computed orientation matrix. 
% 
Furthermore, if the measurement of the angular velocity vector is noisy, then the orientation matrix, computed by integrating \eqref{eq:poissons_equation1}, is erroneous. 
% 
% Thus, naive integration of the Poisson's equation does not provide 

% \subsection{Orientation Measurement Model}

In this work, we assume that a noisy measurement of the orientation $\SO_{\rm B/A}$ is available.
The noisy orientation measurement is denoted by $\SO_{\rm B_m/A}.$
In the physical experiment, a noisy measurement of the orientation $\SO_{\rm B/A}$ is computed using the IMU data. 
The details of the orientation computation are described in Appendix I.
% \ref{appnd:AM}.
Alternatively, the orientation measurements can be obtained using vision-based sensors \cite{shabayek2012vision, thurrowgood2009vision, kessler2010vision}.

% when doing the numerical simulations and then in the experimental analysis we use an IMU and the equations \eqref{eq:k_A_IMU_Measurement}, \eqref{eq:j_A_IMU_Measurement} and \eqref{eq:i_A_IMU_Measurement} to get a noisy measurement of the orientation $\SO_{\rm B/A}$ .

% In practice, the orientation of $\SB$ can be computed using the measurement of two known vectors. 
% This work assumes that a potentially noisy measurement of the orientation of $\rm F_B$ is available. 
% 
% For example, 
% 

% 
% The following assumes that a noisy measurement of the orientation $\SO_{\rm B/A}$ is available. 
% The noisy orientation measurement is denoted by $\SO_{\rm B_m/A}.$
% 
% Thus, the problem is determining the orientation of $\SB$ using noisy measurements of the angular velocity vector and noisy measurements of the orientation matrix.  

The problem is to develop an attitude estimator that combines the orientation propagation and a noisy measurement of the orientation to construct a more accurate estimate of the attitude. 

% Assume that a measurement $\SO_{\rm B_m/A}$ of the orientation of frame $\rm F_B$ is available. 
% Let $\SO_{\rm B_m/A}$ denote this orientation.
% 
% Let $\rm F_{B_\rmm}$ denote the measurement of the orientation 
% The objective 

%\clearpage
\section{Retrospective Cost Attitude Estimator}
\label{sec:RCAE}
This section presents the data-driven retrospective cost attitude estimator (RCAE). 
The attitude estimator is motivated by the multiplicative extended Kalman filter (MEKF).

\subsection{Attitude Error}
Following the definition of the attitude error in \cite{markley2003attitude, de2014so, oveissi2024retrospective}, we define the attitude error $z\in \BBR$ between two orientation matrices $\SO_1$ and $\SO_2$ as 
\begin{align}
    z 
        \isdef 
            \tr (\SO_1 ^\rmT \SO_2 - I_3),
\end{align}
where $I_3$ is the $3\times 3$ identity matrix. 
Note that if $\SO_1 = \SO_2,$ then $z=0.$
Note that $z \in [-4,0].$

For $k = \{1, 2, \ldots, \},$
let $\SO_{\rm B/A}^k$ denote the orientation $\SO_{\rm B/A}(k \Delta_T)$ at time $ t = k \Delta_T.$
Let $\SO_{\rm \hat B/A}^k$ and $\SO_{\rm  B_m/A}^k$ denote an estimate and a measurement of the orientation $\SO_{\rm B/A}^k,$ respectively at step $k.$
Then, 
\begin{align}
    \SO_{\rm  \hat B/B_m}^k
        =
            \SO_{\rm  \hat B/A}^k 
            \SO_{\rm  A/B_m}^k
        =
            \SO_{\rm  \hat B/A}^k (\SO_{\rm  B_m/A}^k)^\rmT.
            \label{eq:O_b_bm}
\end{align}
Define the error 
\begin{align}
    z_k 
        \isdef
            % \tr ((\SO_{\rm B_m/A}^k)^\rmT \SO_{\rm \hat B/A}^k - I_3).
            \tr ( \SO_{\rm  \hat B/B_m}^k  - I_3).         
    \label{eq:z_k_def}
\end{align}

\subsection{Attitude Estimator}

The retrospective cost attitude estimator is
\begin{align}
    \SO^{k+1} _{\rm \hat B/A} 
        =
            e^{- (\omega_{k} \Delta_T + \eta_k )^\times}
            \SO^{k}_{\rm \hat B/A},
    \label{eq:RCAE}
\end{align}
where 
$\omega_k \isdef \omega_{\rm B/A|B}(k \Delta t) ,$ and the \textit{attitude correction signal} $\eta_k \in \BBR^3$ is given by
\begin{align}
    \eta_k 
        =
            u_k n_{\rm  \hat B/B_m}^k,
    \label{eq:eta_equation}
\end{align}
where  
$n_{\rm  \hat B/B_m}^k \in \BBR^3$ is the eigenaxis of the orientation matrix $\SO_{\rm  \hat B/B_m}^k,$
and 
the signal $u_k \in \BBR$ is computed using the retrospective cost optimization described in the next section. 
Note that 
\begin{align}
    n_{\hat{\rmB}/\rmB _m} ^k
        &=
            \frac{1}{2\sin{\theta^k_{\hat{\rmB}/\rm B_m}} }
            (\SO_{\rm \hat{B}/B_m}^k - \SO_{\rm B_m/\hat{B}}^k)^{-\times}.
\end{align}

\subsection{Retrospective Cost Optimization}
This section briefly reviews retrospective cost adaptive control (RCAC) and focuses on the algorithm for the attitude estimation problem. 
RCAC is described in detail in \cite{rahmanCSM2017}, and its extension to digital PID control is given in \cite{rezaPID}.
Consider a system
\begin{align}
    x_{k+1} &= f(k,x_k,u_k,w_k), 
    \label{eq:state}\\
    z_k     &= g(k,x_k,w_k), 
    \label{eq:output}
\end{align}
where
$x_k $ is the state,
$u_k $ is the input, 
$w_k $ is the exogenous signal that can represent commands, external disturbance, or both, and
$z_k $ is the performance variable.
The functions $f$ and $g$ represent the dynamics and output maps.
The goal is to develop an adaptive law that drives the performance variable $z_k$ to zero, asymptotically to zero without explicit knowledge of $f$ and $g.$
% \begin{align}
%     z_k \isdef r_k - y_k,
% \end{align}
% where $r_k \in \BBR^{l_y}$ is the desired output, asymptotically to zero without explicit knowledge of $f$ and $g.$
% $y_k$ to desired values with limited modeling information about \eqref{eq:state}, \eqref{eq:output}.
% % 
% Note that explicit knowledge of $f$ and $g$ is not required since RCAC requires only the input and output measurements. 

% The algorithm in \cite{rezaPID} is specialized for a PID control structure as shown in \cite{goel2020adaptive}.
% 
% Consider the control law
% \begin{align}
%     u_k 
%         =
%             \Phi_k \theta_k,
%     \label{eq:uk_reg}
% \end{align}
% where, for all $k\ge0$, 
% the regressor $\Phi_k \in \BBR^{l_u \times l_\theta}$ contains the measurements and $l_\theta$ depends on the structure of the controller.
% % 
% The controller coefficients $\theta_k \in \BBR^{l_\theta}$ are optimized by RCAC as described below. 

Consider an adaptive law
\begin{align}
    u_k = \Phi_k \theta_k,
    \label{eq:control_law}
\end{align}
where $\Phi_k \in \BBR^{l_u \times l_\theta}$ is the regressor matrix that is constructed using measurements and 
$\theta_k \in \BBR^{l_\theta}$ is the vector of the gains optimized by RCAC at step $k.$
For example, a discrete-time adaptive PID update law can be written in the  form given by \eqref{eq:control_law}, where, at step $k,$
\begin{align}
    \Phi_k 
        \isdef 
            \matl
                z_k & \gamma_k & z_k-z_{k-1}
            \matr, \quad
    \theta_k 
        =
            \matl
                K_{\rmp,k}  \\
                K_{\rmi,k} \\
                K_{\rmd,k}
            \matr, 
\end{align}
$\gamma_k = \sum_i z_i$ is the accumulated error, and 
$K_{\rmp,k}, K_{\rmi,k},$ and $K_{\rmd,k}$ are the proportional, integral, and derivative gains, respectively.
Various MIMO parameterizations of the adaptive law \eqref{eq:control_law} are described in \cite{goel_2020_sparse_para}.
To determine the gains $\theta_k$, let $\theta \in \BBR^{l_\theta}$, define the \textit{retrospective performance variable} by
%%\vspace{-5pt}
\begin{align}
    \hat{z}_{k}(\theta)
        \isdef
            z_k + 
            G_\rmf(\shiftq) (\Phi_{k} \theta - u_{k}),
    \label{eq:zhat_def}
\end{align}
where 
\begin{align}
    G_\rmf(\shiftq) 
        \isdef
            \sum_{i=1}^{n_\rmf} \frac{N_i}{\shiftq^i}
\end{align}
is an FIR filter. 
Note that $N_i \in \BBR^{l_z \times l_u}.$
% The sign of $\sigma$ is the sign of the leading numerator coefficient of the transfer function from $u_k$ to $z_k.$
%
Furthermore, define the \textit{retrospective cost function} $J_k \colon \BBR^{l_\theta} \to [0,\infty)$ by
%%\vspace{-10pt}
\begin{align}
    J_k(\theta) 
        &\isdef
            \sum_{i=0}^k
                \hat{z}_{i}(\theta) ^\rmT 
                R_z
                \hat{z}_{i}(\theta)
                 +
                (\Phi_k \theta)^\rmT
                R_u
                (\Phi_k \theta)
                \nn \\ &\quad 
                +                
                (\theta-\theta_0)^\rmT 
                P_0^{-1}
                (\theta-\theta_0),
    \label{eq:RetCost_def}
\end{align}
where $R_z \in \BBR^{l_z \times l_z}$, $ R_u \in \BBR^{l_u \times l_u},$ and $P_0\in\BBR^{l_\theta\times l_\theta}$ are positive definite; and $\theta_0\in\BBR^{l_\theta}$ is the initial vector of controller gains.
%
% For all examples in this paper, we set $\theta_0 = 0$; however, $\theta_0$ can be initialized to nonzero gains in practice if desired.

\begin{proposition}
    Consider \eqref{eq:RetCost_def}, 
    where $\theta_0 \in \BBR^{l_\theta}$ and $P_0 \in \BBR^{l_\theta \times l_\theta}$ is positive definite. 
    For all $k\ge0$, denote the minimizer of $J_k$ given by \eqref{eq:RetCost_def} by
    \begin{align}
        \theta_{k+1}
            \isdef
                \underset{ \theta \in \BBR^n  }{\operatorname{argmin}} \
                J_k({\theta}).
        \label{eq:theta_minimizer_def}
    \end{align}
    Then, for all $k\ge0$, $\theta_{k+1}$ is given by 
    \begin{align}
        \theta_{k+1} 
            &=
                \theta_k  - 
                 P_{k+1}\Phi_{\rmf, k}^\rmT R_z
                \left( z_k + \Phi_{\rmf,k} \theta_k - u_{\rmf,k} \right)
                 \nn  \\ &\quad 
                 - 
                 P_{k+1}\Phi_{k}^\rmT
                 R_u \Phi_{k} \theta_k 
                 , \label{eq:theta_update}
    \end{align}
    where 
    \begin{align}
        P_{k+1} 
            &=
                P_{k}
                -  
                P_k  
                \overline \Phi_k ^\rmT 
                \left( 
                    \overline R ^{-1} +  
                    \overline \Phi_k
                    P_k
                    \overline \Phi_k ^\rmT 
                \right)^{-1}
                \overline \Phi_k P_k,
        \label{eq:P_update_noInverse}
    \end{align}
    and
    \begin{align}
        \Phi_{\rmf,k}
            &\isdef 
                G_\rmf(\shiftq) \Phi_{k}, \quad 
        u_{\rmf,k}
            \isdef 
                G_\rmf(\shiftq) u_{k}, \quad 
        \\
        \overline \Phi_k
            &\isdef 
            \matl
                \Phi_{\rmf,k} \\
                \Phi_k
            \matr,
        \quad 
        \overline R
            \isdef 
                \matl
                    R_z & 0 \\
                    0   & R_u
                \matr.
    \end{align}
    % \begin{align}
    %     P_{k+1} 
    %         &=
    %             P_{k}
    %             -  \frac
    %                 { P_{k}\Phi_{k-1}^\rmT  \Phi_{k-1} P_{k} }
    %                 { 1 +   \Phi_{k-1} P_{k} \Phi_{k-1}^\rmT  }.
    %     \label{eq:P_update_noInverse}
    % \end{align}
\end{proposition}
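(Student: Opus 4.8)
The statement is the recursive‑least‑squares (RLS) solution of the regularized quadratic program $\min_\theta J_k(\theta)$, so the plan is to: (i) show $J_k$ is a strictly convex quadratic in $\theta$, hence has a unique minimizer characterized by its normal equations; (ii) write those equations in information form; (iii) subtract the equations at steps $k-1$ and $k$ to obtain the correction \eqref{eq:theta_update}; and (iv) convert the information‑form covariance recursion to \eqref{eq:P_update_noInverse} via the matrix inversion lemma. For (i), I would first note that $G_\rmf(\shiftq)$ is linear and $\theta$ is constant, so $G_\rmf(\shiftq)(\Phi_i\theta) = \Phi_{\rmf,i}\theta$ and therefore $\hat z_i(\theta) = (z_i - u_{\rmf,i}) + \Phi_{\rmf,i}\theta$ from \eqref{eq:zhat_def} is affine in $\theta$. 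Stacking the filtered residual and the $R_u$ penalty (understood as the running cost $\sum_{i=0}^k(\Phi_i\theta)^\rmT R_u(\Phi_i\theta)$) into the augmented regressor $\overline\Phi_i$ and weight $\overline R$ of the statement, $J_k$ becomes $J_k(\theta) = (\theta-\theta_0)^\rmT P_0^{-1}(\theta-\theta_0) + \sum_{i=0}^k (\overline\Phi_i\theta - \overline r_i)^\rmT \overline R\,(\overline\Phi_i\theta - \overline r_i)$ with $\overline r_i \isdef \matl u_{\rmf,i}-z_i \\ 0 \matr$; since its Hessian equals twice $P_0^{-1} + \sum_{i=0}^k \overline\Phi_i^\rmT \overline R\,\overline\Phi_i$ and $P_0^{-1},\overline R$ are positive definite, $J_k$ is strictly convex and $\theta_{k+1}$ is the unique solution of $\nabla J_k(\theta_{k+1}) = 0$.

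Next I would set the gradient to zero, which gives $P_{k+1}^{-1}\theta_{k+1} = P_0^{-1}\theta_0 + \sum_{i=0}^k \overline\Phi_i^\rmT\overline R\,\overline r_i$ where $P_{k+1}^{-1} \isdef P_0^{-1} + \sum_{i=0}^k \overline\Phi_i^\rmT\overline R\,\overline\Phi_i$, together with the same identity for $\theta_k$ (minimizer of $J_{k-1}$), read as $P_0^{-1}\theta_0 = P_0^{-1}\theta_0$ when $k=0$. Subtracting and using $P_{k+1}^{-1} = P_k^{-1} + \overline\Phi_k^\rmT\overline R\,\overline\Phi_k$ gives $P_{k+1}^{-1}(\theta_{k+1}-\theta_k) = -\overline\Phi_k^\rmT\overline R\,(\overline\Phi_k\theta_k - \overline r_k)$, i.e. $\theta_{k+1} = \theta_k - P_{k+1}\overline\Phi_k^\rmT\overline R\,(\overline\Phi_k\theta_k-\overline r_k)$. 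Finally, expanding the block structure $\overline\Phi_k = \matl \Phi_{\rmf,k} \\ \Phi_k \matr$, $\overline R = {\rm diag}(R_z,R_u)$, $\overline r_k = \matl u_{\rmf,k}-z_k \\ 0 \matr$ turns $\overline\Phi_k^\rmT\overline R\,(\overline\Phi_k\theta_k-\overline r_k)$ into $\Phi_{\rmf,k}^\rmT R_z(z_k + \Phi_{\rmf,k}\theta_k - u_{\rmf,k}) + \Phi_k^\rmT R_u\Phi_k\theta_k$, which is exactly the bracket in \eqref{eq:theta_update}.

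For the covariance, the information recursion $P_{k+1}^{-1} = P_k^{-1} + \overline\Phi_k^\rmT\overline R\,\overline\Phi_k$ involves an $l_\theta\times l_\theta$ inverse; I would apply the matrix inversion (Woodbury) lemma, valid since $\overline R$ is invertible, to get $P_{k+1} = P_k - P_k\overline\Phi_k^\rmT(\overline R^{-1} + \overline\Phi_k P_k\overline\Phi_k^\rmT)^{-1}\overline\Phi_k P_k$, which is \eqref{eq:P_update_noInverse} and inverts only the $(l_z+l_u)\times(l_z+l_u)$ matrix $\overline R^{-1} + \overline\Phi_k P_k\overline\Phi_k^\rmT$. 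Positive definiteness of $P_0$, together with each such update keeping $P_{k+1}^{-1}$ positive definite, shows $P_{k+1}$ is well defined and positive definite for all $k\ge 0$, which closes the induction that makes the use of $P_{k+1}$ in \eqref{eq:theta_update} legitimate.

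I do not expect a deep obstacle here — it is in essence a textbook RLS derivation — so the bulk of the care will go into bookkeeping: carrying the filtered regressor $\Phi_{\rmf,k}$ and the raw regressor $\Phi_k$ (with their "data" $u_{\rmf,k}-z_k$ and $0$) in parallel so that $\overline\Phi_k^\rmT\overline R(\overline\Phi_k\theta_k-\overline r_k)$ collapses to exactly the two explicit terms of \eqref{eq:theta_update}, and checking that the $R_u$ penalty in \eqref{eq:RetCost_def} is treated as a running sum over $i$ so that the telescoping $P_{k+1}^{-1}-P_k^{-1} = \overline\Phi_k^\rmT\overline R\,\overline\Phi_k$ used in the subtraction step is exact. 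The one genuinely non‑elementary ingredient is the matrix inversion lemma, which is precisely what makes the stated covariance update — free of any $l_\theta\times l_\theta$ inverse — equivalent to the information‑form recursion.
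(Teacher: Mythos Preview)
Your derivation is correct and is precisely the standard regularized RLS argument; the paper itself does not give a proof but simply cites external references, so your write-up is in fact more complete than what appears in the paper. Your explicit flag that the $R_u$ penalty in \eqref{eq:RetCost_def} must be read as the running sum $\sum_{i=0}^k(\Phi_i\theta)^\rmT R_u(\Phi_i\theta)$ (rather than a single term at step $k$) is exactly right: without that interpretation the telescoping $P_{k+1}^{-1}-P_k^{-1}=\overline\Phi_k^\rmT\overline R\,\overline\Phi_k$ fails and the stated recursions \eqref{eq:theta_update}--\eqref{eq:P_update_noInverse} do not follow.
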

% \textbf{Proof:} 
\begin{proof}
See \cite{AseemRLS,goel2020recursive}.
\end{proof}

Finally, the adaptive signal at step $k+1$ is given by
\begin{align}
    u_{k+1} = \Phi_{k+1} \theta_{k+1}.
    \label{eq:RCAE_u_update}
\end{align}

In the attitude estimation problem, 
the performance variable $z_k$ is given by \eqref{eq:z_k_def}.
% It follows from \eqref{eq:eta_equation} that the adaptive signal $u_k$ is a scalar. 
Note that both $z_k$ and the adaptive signal $u_k$ are scalars.
The RCAE is thus
\eqref{eq:RCAE}, 
\eqref{eq:eta_equation},
\eqref{eq:control_law}, 
\eqref{eq:theta_update}, and 
\eqref{eq:P_update_noInverse}, and its architecture is shown in Figure \ref{fig:RCAE_architecture}.

\begin{figure}[h]
    
    \begin{tikzpicture}[auto, node distance=2.5cm,>=latex' ]
    % Nodes
    \draw [draw=black, fill=green!20] (-4.2,-5.5)  node [xshift=12em,yshift=1em] {\textbf{Retrospective Cost Attitude Estimator}} rectangle (4.2,-1.2);
    \node at (0,0) [block, align=center] (physicalsystem) {Rigid Body} ;
    
    \node [input, below of=physicalsystem, name=input_corect, node distance = 1cm] {};
    
    \node [output, right of=physicalsystem, name=output_system, node distance = 3cm] {};

    \node [block, below of=physicalsystem, name=propagate, node distance = 2cm, align=center] {Attitude\\Estimator\\ \eqref{eq:RCAE}};

    \node [block, right of=propagate, name=error, node distance = 3cm, align=center] {Error\\computation\\ \eqref{eq:O_b_bm}, \eqref{eq:z_k_def}};

    % \node [block, left of=propagate, name=corectsum, node distance = 3cm, align=center] {Corrected\\$\omega$};
    
    \node [block, below of=propagate, name=rcae, node distance = 2cm, align=center] {Adaptive\\Signal\\ \eqref{eq:RCAE_u_update}};
    
    \node [block, left of=propagate, name=eta, node distance = 3cm, align=center] {Attitude\\Correction \\ \eqref{eq:eta_equation}};

        \draw[->] (physicalsystem) -| node [xshift=0em] {$\SO_{\rm B_m/A}^k$} (error.90);

        \draw[->] (physicalsystem) -- node [xshift=0em] {$\omega_{\rmm,k}$} (propagate);
        
        % \draw[-] (output_system) |- node {} (input_corect);
        % \draw [->] (output_system) -- node [near end]{$O$} (error);
        \draw [->] (error.270) |- node {$z_k$} (rcae.0);
        \draw [->] (rcae.180) -| node {$u_k$} (eta.270);
        \draw [->] (eta) -- node [yshift=0em] {$\eta_k$} (propagate);
        % \draw [->] (corectsum) -- node {} (propagate);
        \draw [->] (propagate) -- node {$\SO_{\rm \hat B/A}^k$} (error);
        % \draw [->] (input_corect) -| node [near end] {$\omega$} (corectsum);
    \end{tikzpicture}
    \caption{Retrospective cost attitude estimator architecture.}
    \label{fig:RCAE_architecture}
\end{figure}
%\clearpage

%\clearpage
\section{Experimental Verification and Performance Comparison}
\label{sec:simulation}
In this section, we apply the RCAE developed in the previous section to estimate the attitude of a rigid body with a noisy angular velocity and attitude measurement in a numerical simulation as well as a physical experiment. 
Furthermore, the performance of RCAE is also compared with MEKF

% \clearpage
\subsection{Numerical Simulation}
% 
% the performance of the RCAE numerically. 
% We first consider an attitude estimate problem.
The rigid body $\SB$ is assumed to be rotating with a time-varying angular velocity vector given by
\begin{align}
    \omega_{\rm B/A|B} (t)
        =
            \matl 
                80 \cos (5.0 t) \\
                60 \cos (7.0 t) \\
                40 \cos (9.0t)
            \matr.
\end{align}
% The attitude of $\SB$ at $t=0$  $\SO_{\rm B/A}(0)$
The 3-2-1 Euler angles $\psi, \theta, $ and $ \phi$ corresponding to the orientation matrix of $\SB$ at $t=0$, in degrees, are assumed to be 
\begin{align}
    \psi(0) &= 30, \
    \theta(0) = 20, \
    \phi(0) = 10.
\end{align}
The initial orientation is thus
\begin{align}
    \SO_{\rm B/A}(0) = \SO_1(10) \SO_2(20) \SO_3(30),
\end{align}
where $\SO_{1}(\phi), \SO_2(\theta),$ and $\SO_3(\psi)$ are the Euler matrices corresponding to the rotation by $\phi,$ $\theta,$ and $\psi$ degrees about the first, second, and third Euler axis, respectively.  
In this work, we simulate the orientation $\SO_{\rm B/A}(t)$ by discretely propagating  \eqref{eq:poissons_equation_solution_DT}.

The measured angular velocity vector is assumed to be
\begin{align}
    \omega_{\rmm, k} 
        = 
            \omega_k
            +
            b
            +
            w_k, 
\end{align}
where 
$\omega_k \isdef \omega_{\rm B/A|B}(k \Delta t) ,$
$b \in \BBR^3$ is an unknown bias and $w_k \sim \SN(0, \sigma_w^2 I_3)$ is a zero-mean Gaussian noise. 
In this example, we set $b = \matl 5 & 7 & 4 \matr^\rmT$ and $\sigma_w = 2$ deg/sec.
Figure \ref{fig:True_Measured_Angular_Velocity_simulation} shows the true and measured angular velocity. 

\begin{figure}[H]
    \centering
    \includegraphics[width=\columnwidth]{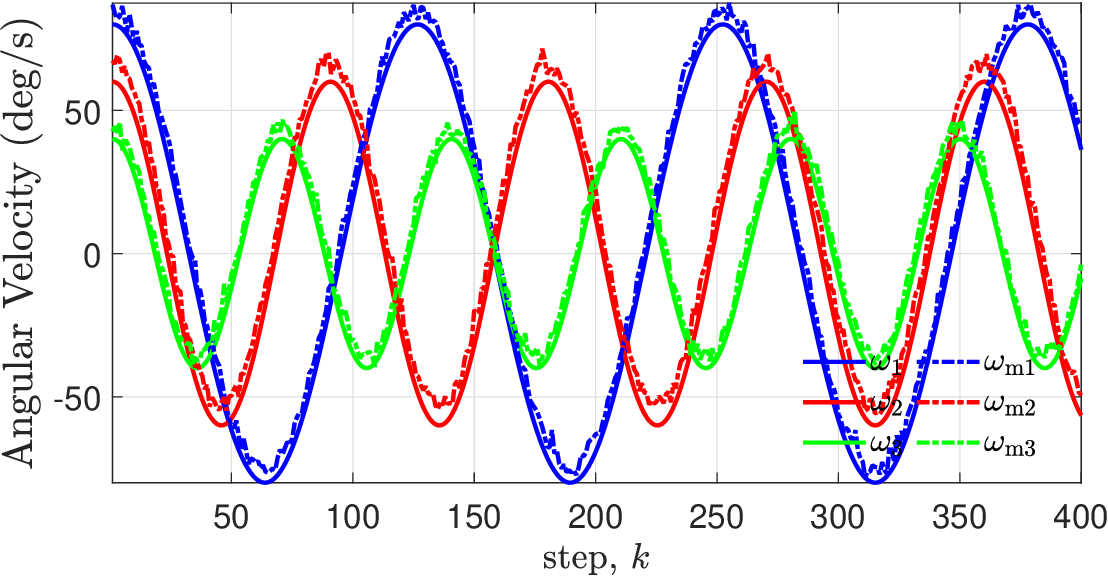}
    \caption{True and measured angular velocities.}
    \label{fig:True_Measured_Angular_Velocity_simulation}
\end{figure}

The orientation measurements are constructed as follows. 
Let $\phi_k,$ $\theta_k,$ and $\psi_k$ denote the 3-2-1 Euler angles of  $\SO_{\rm B/A}(k \Delta t).$
The measured orientation is then given by
\begin{align}
    \SO_{\rm B_m/A}^k 
        =
            \SO_1(\phi_{\rmm,k})
            \SO_2(\theta_{\rmm,k})
            \SO_3(\psi_{\rmm,k}),
\end{align}
where  
\begin{align}
    \matl 
        \phi_{\rmm, k} \\
        \theta_{\rmm, k} \\
        \psi_{\rmm, k}
    \matr
        =
            \matl 
                \phi_k \\
                \theta_k \\
                \psi_k
            \matr
            +
            v_k, 
\end{align}
and $v_k \sim \SN(0, \sigma_v^2 I_3)$ is a zero-mean Gaussian noise. 
In this example, we set $\sigma_v = 5$ deg.

In RCAE, we set 
$N_1 = 1,$
$P_0 = 0.1I_{3},$
$\lambda = 1$.
The initial attitude estimate $\SO_{\rm \hat B/A }^0 = I_{3}$ since no prior information about the attitude is assumed. 
Figure  \ref{fig:Euler_Angles_RCAF_simulation} shows the 3-2-1 Euler angles corresponding to the 
true orientation $\SO_{\rm B/A}^k,$ measured orientation $\SO_{\rm B_m/A}^k,$ and estimation orientation $\SO_{\rm \hat B/A}^k.$ 
% Note that the estimates are closer to the true values of the Euler angles than the measurements.
Figure \ref{fig:RCAF_RCAC_signals_simulation} shows 
a) the signal $u_k$ computed by RCAE, 
and 
b) the estimator gain $\theta_k$ computed by RCAE. 
Figure \ref{fig:Orientation_Animation_RCAF_simulation} shows the true and estimated frames corresponding to the true orientation $\SO_{\rm B/A}^k,$ and estimation orientation $\SO_{\rm \hat B/A}^k$ at several iteration steps during the estimation.
\begin{figure}[h]
    \centering
    \includegraphics[width=\columnwidth]{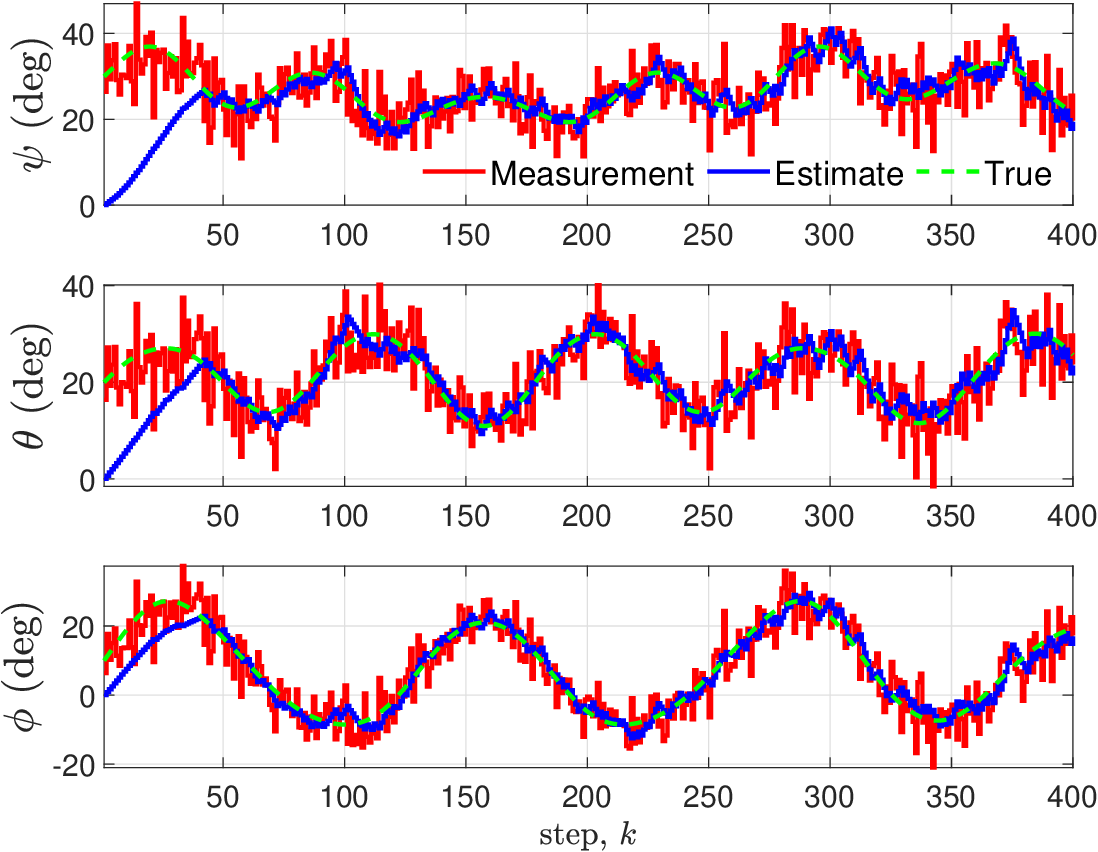}
    \caption{3-2-1 Euler angles corresponding to the 
    true orientation $\SO_{\rm B/A}^k,$
    measured orientation $\SO_{\rm B_m/A}^k,$ and 
    estimated orientation $\SO_{\rm \hat B/A}^k$ using RCAE.
    % Note that the estimates are closer to the true values of the Euler angles than the measurements. 
    }
    \label{fig:Euler_Angles_RCAF_simulation}
\end{figure}
% Figure \ref{fig:True_Measured_Angular_Velocity} shows the true and measured angular velocities.

\begin{figure}[H]
    \centering
    \includegraphics[width=\columnwidth]{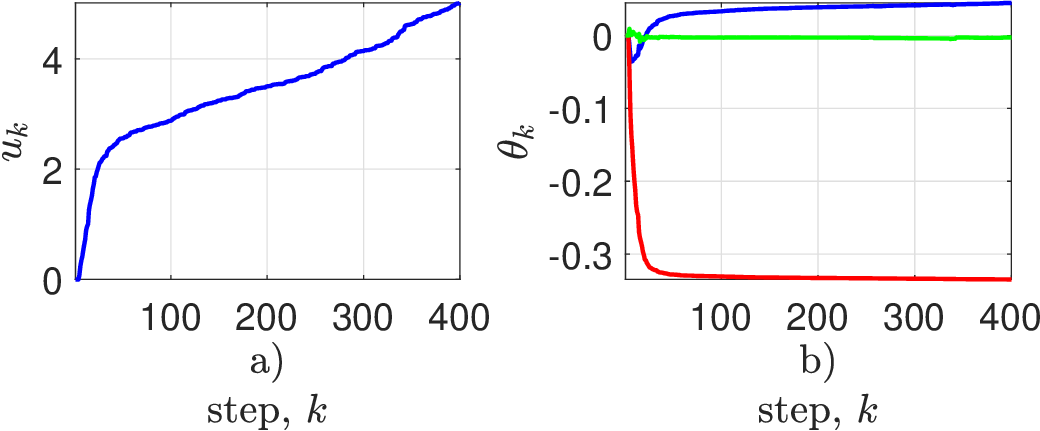}
    \caption{
    Retrospective cost attitude estimation.  
    a) shows the signal $u_k$ computed by RCAE, 
    and 
    b) shows the estimator gain $\theta_k$ optimized by RCAE. 
    }
    \label{fig:RCAF_RCAC_signals_simulation}
\end{figure}

\begin{figure}[h]
    \centering
    \includegraphics[width=\columnwidth]{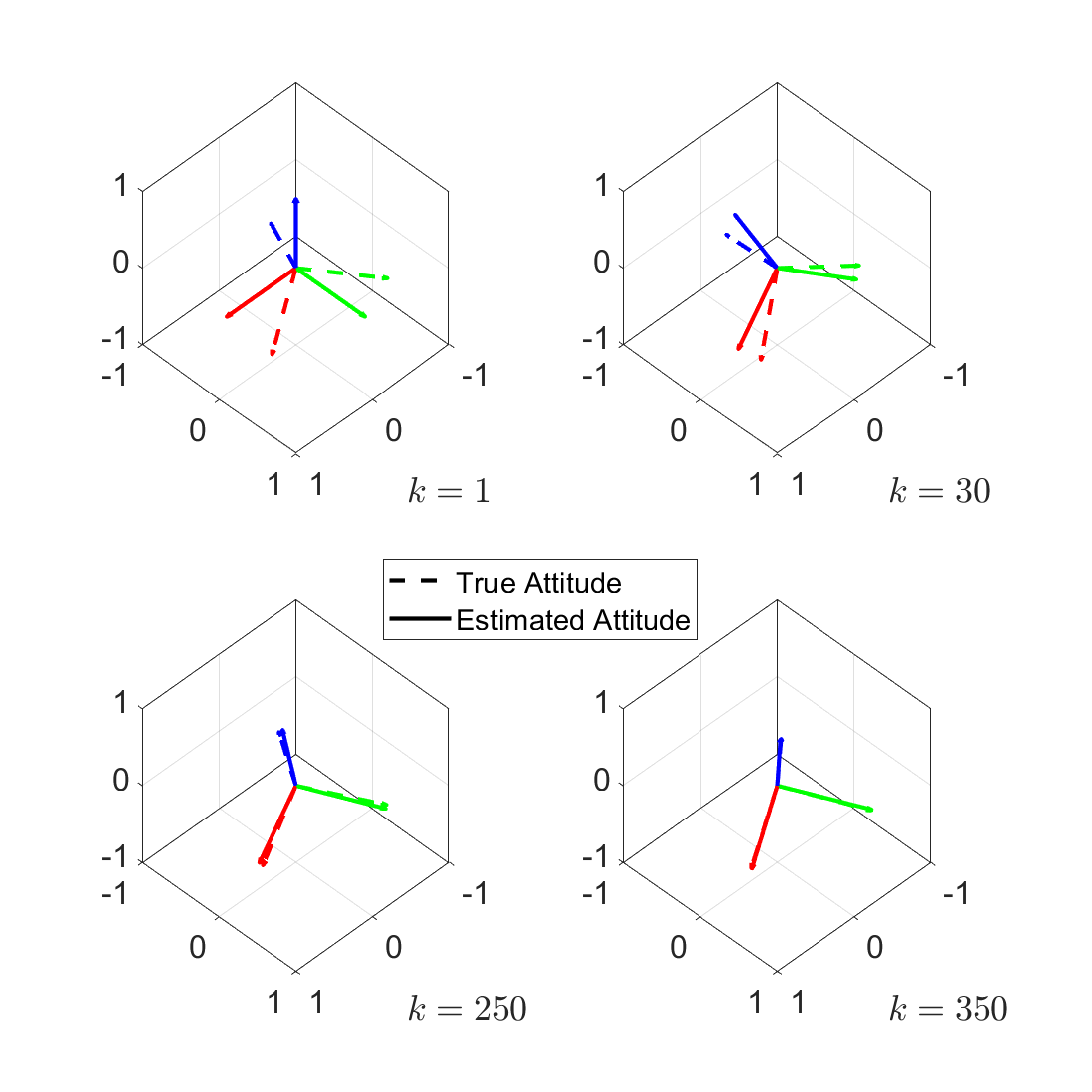}
    \caption{True and estimated frames corresponding to the
    true orientation $\SO_{\rm B/A}^k$ and 
    estimated orientation $\SO_{\rm \hat B/A}^k.$}
    \label{fig:Orientation_Animation_RCAF_simulation}
\end{figure}
 
Next, the attitude is estimated using MEKF, where the attitude is represented as a quaternion. 
In MEKF, we set the initial covariance of quaternion error $P(0) = 10^4 I_6, $
the process covariance $Q = \begin{bmatrix}
    0.0001 I_3 &0\\
    0 & I_3
\end{bmatrix}$ and the measurement covariance $R = \begin{bmatrix}
    0.01I_3 &0\\
    0 & 100I_3
\end{bmatrix}.$
Since MEKF provides the estimate of the attitude in the quaternion form, we convert the attitude estimate to the orientation matrix and the corresponding 3-2-1 Euler angles. 
In particular, the orientation matrix $\SO_{\rm B/A}$ corresponding to the quaternion 
\begin{align}
    q_{\rm B/A} 
        =
            \matl    
                \eta_{\rm B/A} \\
                \varepsilon_{\rm B/A}
            \matr, 
\end{align}
where $\eta_{\rm B/A} \in [-1, 1]$ and $\varepsilon_{\rm B/A} = \BBR^3$
is
\begin{align}
    \SO_{\rm B/A}
        =
            I_3 - 2 \eta_{\rm B/A} \varepsilon_{\rm B/A}^\times 
            + 
            2 \varepsilon_{\rm B/A}^{\times 2}.
\end{align}
% \begin{align}
%     \SO_{\rm B/A}
%         = \matl 
%         2b^2 -1 & 2bc & 2bd \\[1ex]
%         2bc & 2c^2 -1 & 2cd \\[1ex]
%         2bd & 2cd & 2d^2 -1
%         \matr ^{-1}
% \end{align}
% 
Figure \ref{fig:RCAF_MEKF_Performance_simulation} shows the attitude error $z_k$ computed with both MEKF and RCAE. 
Figure \ref{fig:Euler_Angles_RCAF_MEKF_Errors_simulation} shows the absolute value of the 3-2-1 Euler angle errors $e_\psi, e_{\theta}, $ and $e_\phi$ obtained with MEKF and RCAE. 
Note that, unlike the MEKF, the RCAE directly estimates the orientation matrix and is computationally less expensive. Additionally, while the MEKF requires explicit estimation and correction of gyro bias, the RCAE can compensate for an unknown constant bias without needing to estimate it.
\begin{figure}[h!]
    \centering
    \includegraphics[width=\columnwidth]{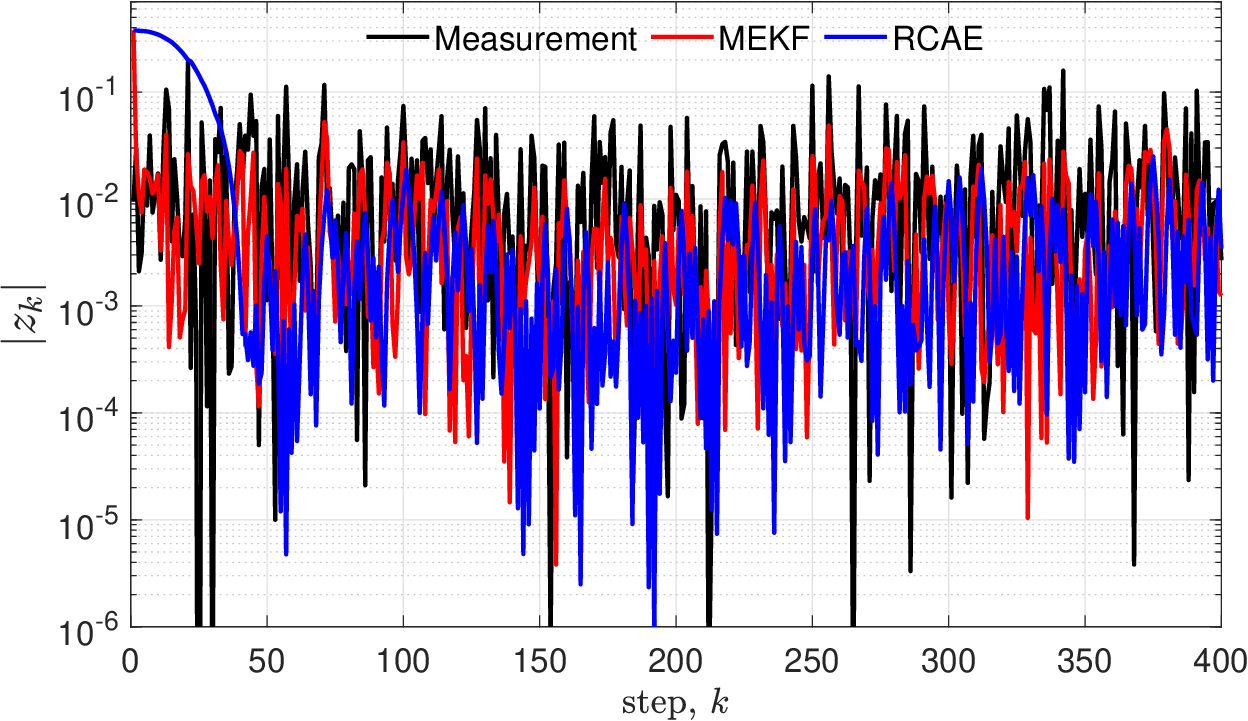}
    \caption{
    Attitude error obtained with RCAE and MEKF. 
    }
    \label{fig:RCAF_MEKF_Performance_simulation}
\end{figure}

\begin{figure}[h!]
    \centering
    \includegraphics[width=\columnwidth]{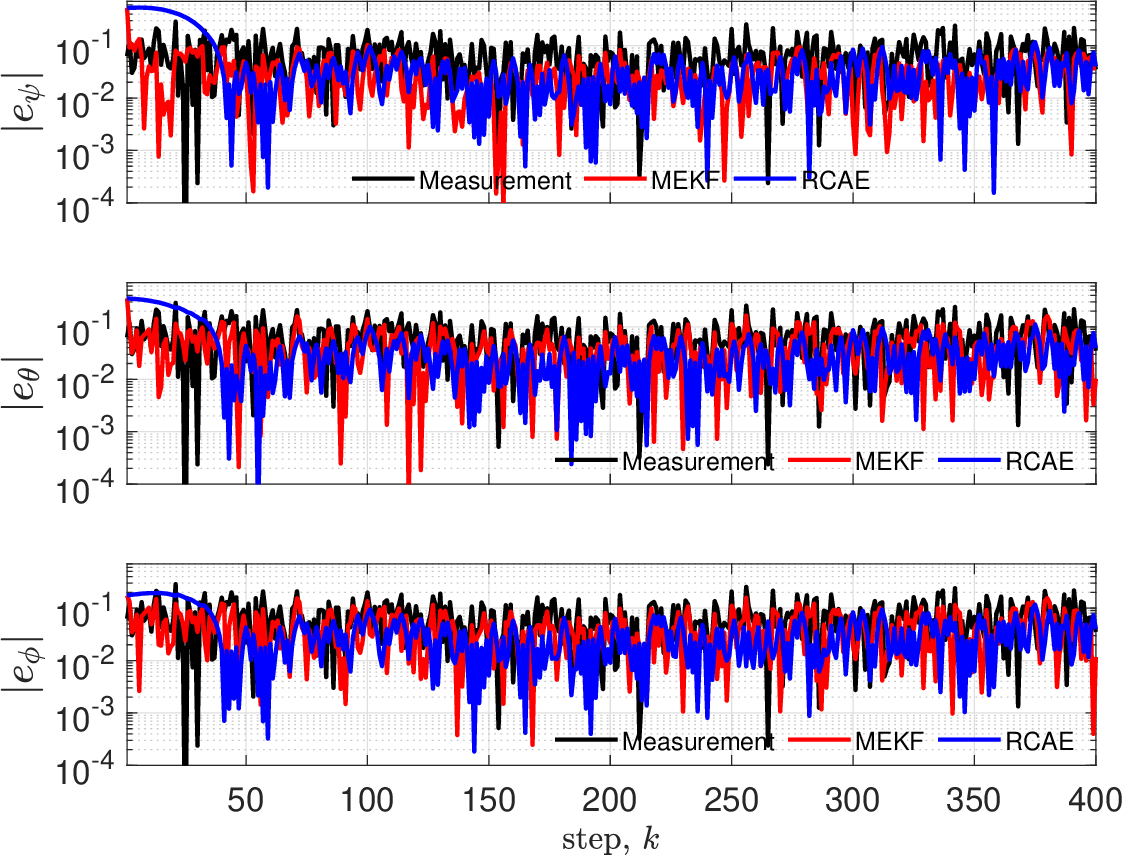}
    \caption{
    3-2-1 Euler angle errors obtained with RCAE and MEKF. 
    }
    \label{fig:Euler_Angles_RCAF_MEKF_Errors_simulation}
\end{figure}

% Next, the same numerical experiment is conducted using MEKF to estimate the attitude. For the computation of the attitude of a body using the MEKF algorithm, the initial data required are $\hat{q}(0)$, initial attitude; $\rm P(0)$, the initial covariance of the quaternion error vector $a$; $\rm Q$, the covariance of the gyroscopes; and $\rm{R}$, the covariance of each of an external vector measurement. 

% Figure  \ref{fig:Euler_Angles_MEKF_simulation} shows the 3-2-1 Euler angles corresponding to the 
% true orientation $\SO_{\rm B/A}^k,$ measured orientation $\SO_{\rm B_m/A}^k,$ and estimation orientation $\SO_{\rm \hat B/A}^k.$ 
% Note that the estimates are closer to the true values of the Euler angles than the measurements.

% \begin{figure}[h!]
%     \centering
%     \includegraphics[width=\columnwidth]{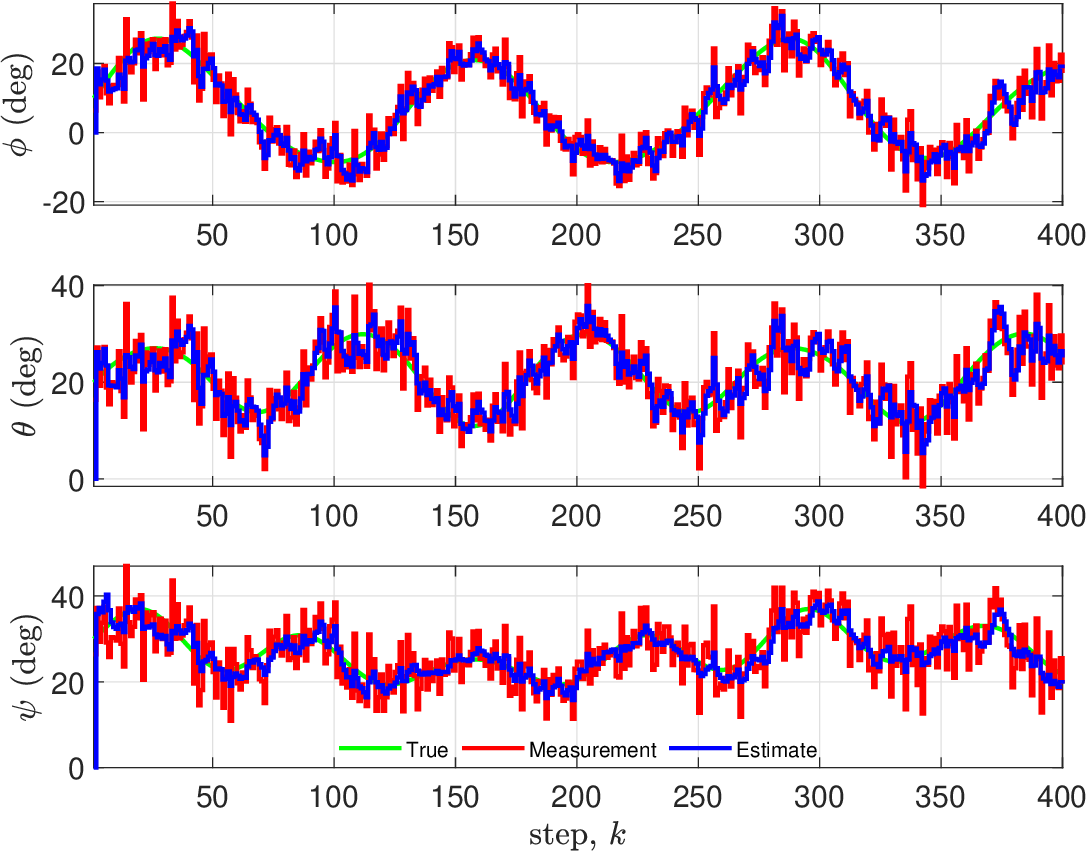}
%     \caption{3-2-1 Euler angles corresponding to the 
%     true orientation $\SO_{\rm B/A}^k,$
%     measured orientation $\SO_{\rm B_m/A}^k,$ and 
%     estimation orientation $\SO_{\rm \hat B/A}^k.$
%     Note that the estimates are closer to the true values of the Euler angles than the measurements.}
%     \label{fig:Euler_Angles_MEKF_simulation}
% \end{figure}

% \clearpage
\subsection{Physical Experiment}
\label{sec:experiments}
To investigate and verify the performance of the RCAE algorithm, an experimental setup featuring a \href{https://cdn-shop.adafruit.com/datasheets/BST_BNO055_DS000_12.pdf}{BNO055} sensor is used. The BNO055 is a popular 9-DOF IMU module that integrates a triaxial accelerometer, gyroscope, and magnetometer. It also includes a microcontroller with sensor fusion algorithms, allowing it to provide directly calibrated orientation data (quaternions, Euler angles). In this setup, the raw data from the accelerometer and magnetometer are used to obtain a noisy orientation measurement. 
In contrast, the orientation data from the BNO055's built-in sensor fusion algorithms serves as the ground truth for performance comparison. 

First, the performance of the RCAE algorithm is evaluated using the experimental setup. The gyroscope data is utilized in equation \eqref{eq:RCAE} to propagate Poisson's equation, while the raw accelerometer and magnetometer data are used to construct a noisy measurement of $\SO_{\rm B/A}$, following equations \eqref{eq:k_A_IMU_Measurement}, \eqref{eq:j_A_IMU_Measurement} and \eqref{eq:i_A_IMU_Measurement}.

Figure  \ref{fig:Euler_Angles_RCAF_experiment} shows the 3-2-1 Euler angles corresponding to the 
true orientation $\SO_{\rm B/A}^k,$ measured orientation $\SO_{\rm B_m/A}^k,$ and estimation orientation $\SO_{\rm \hat B/A}^k.$ 
\begin{figure}[h!]
    \centering
    \includegraphics[width=\columnwidth]{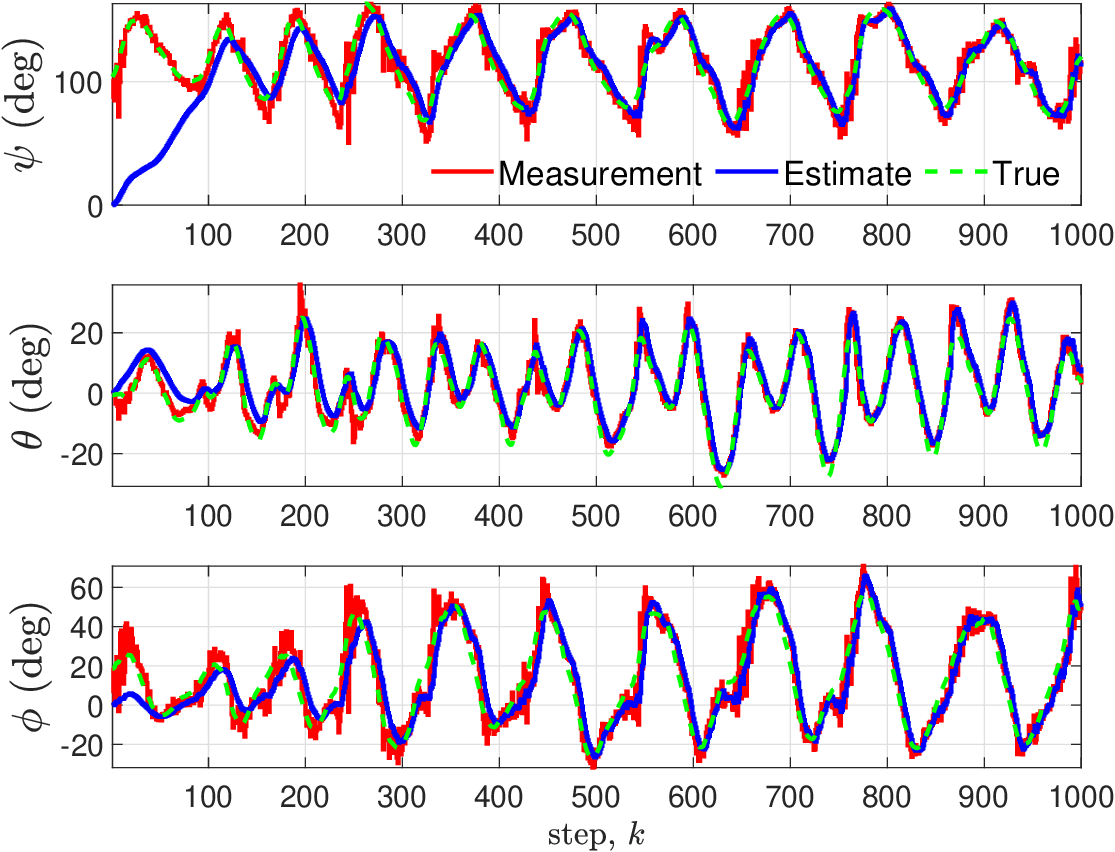}
    \caption{3-2-1 Euler angles corresponding to the 
    true orientation $\SO_{\rm B/A}^k,$
    measured orientation $\SO_{\rm B_m/A}^k,$ and 
    estimation orientation $\SO_{\rm \hat B/A}^k.$
    Note that the estimates are closer to the true values of the Euler angles than the measurements.}
    \label{fig:Euler_Angles_RCAF_experiment}
\end{figure}

Figure \ref{fig:Orientation_Animation_RCAF_experiment} shows the true and estimated frames corresponding to the true orientation $\SO_{\rm B/A}^k,$ and estimation orientation $\SO_{\rm \hat B/A}^k$ at several iteration steps during the estimation.
\begin{figure}[h!]
    \centering
    \includegraphics[width=\columnwidth]{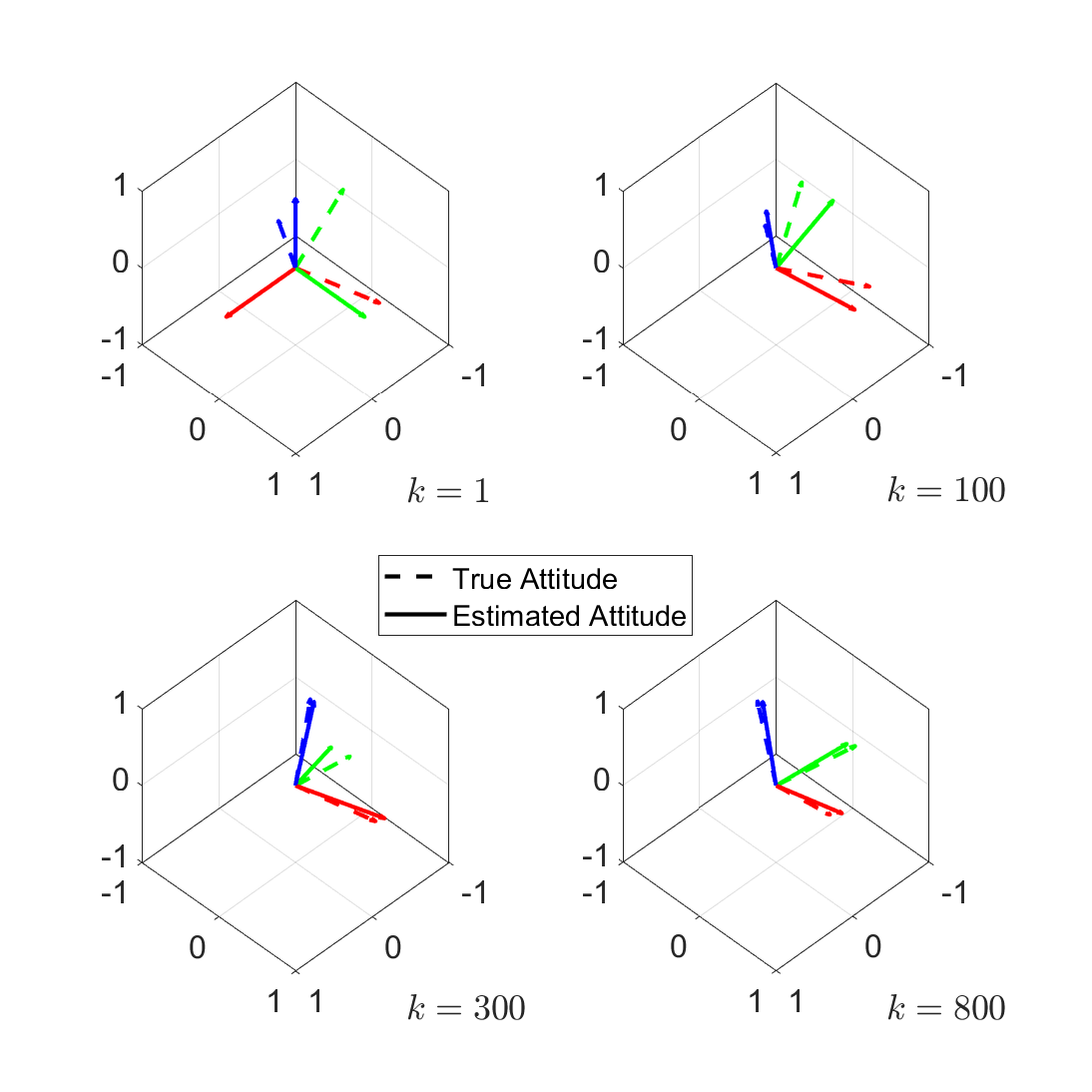}
    \caption{True and estimated frames corresponding to the
    true orientation $\SO_{\rm B/A}^k$ and 
    estimation orientation $\SO_{\rm \hat B/A}^k.$}
    \label{fig:Orientation_Animation_RCAF_experiment}
\end{figure}

Next, the attitude is estimated using MEKF, where the attitude is represented as a quaternion. 
In MEKF, we set the initial covariance of quaternion error $P(0) = 10^4 I_6, $
the process covariance $Q = \begin{bmatrix}
    0.0001 I_3 &0\\
    0 & I_3
\end{bmatrix}$ and the measurement covariance $R = \begin{bmatrix}
    0.01I_3 &0\\
    0 & 100I_3
\end{bmatrix}.$ A reference gravity vector and magnetic field are also needed, obtained from a fixed accelerometer and the World Magnetic Model (WMM), respectively.

Figure \ref{fig:RCAF_MEKF_Performance_experiment} shows the attitude error $z_k$ computed with both MEKF and RCAE. 
Figure \ref{fig:Euler_Angles_RCAF_MEKF_Errors_experiment} shows the absolute value of the 3-2-1 Euler angle errors $e_\psi, e_{\theta}, $ and $e_\phi$ obtained with MEKF and RCAE. 
Note that, unlike the MEKF, the RCAE directly estimates the orientation matrix and is computationally less expensive. Additionally, while the MEKF requires explicit estimation and correction of gyro bias, the RCAE can compensate for an unknown constant bias without needing to estimate it.

\begin{figure}[h!]
    \centering
    \includegraphics[width=\columnwidth]{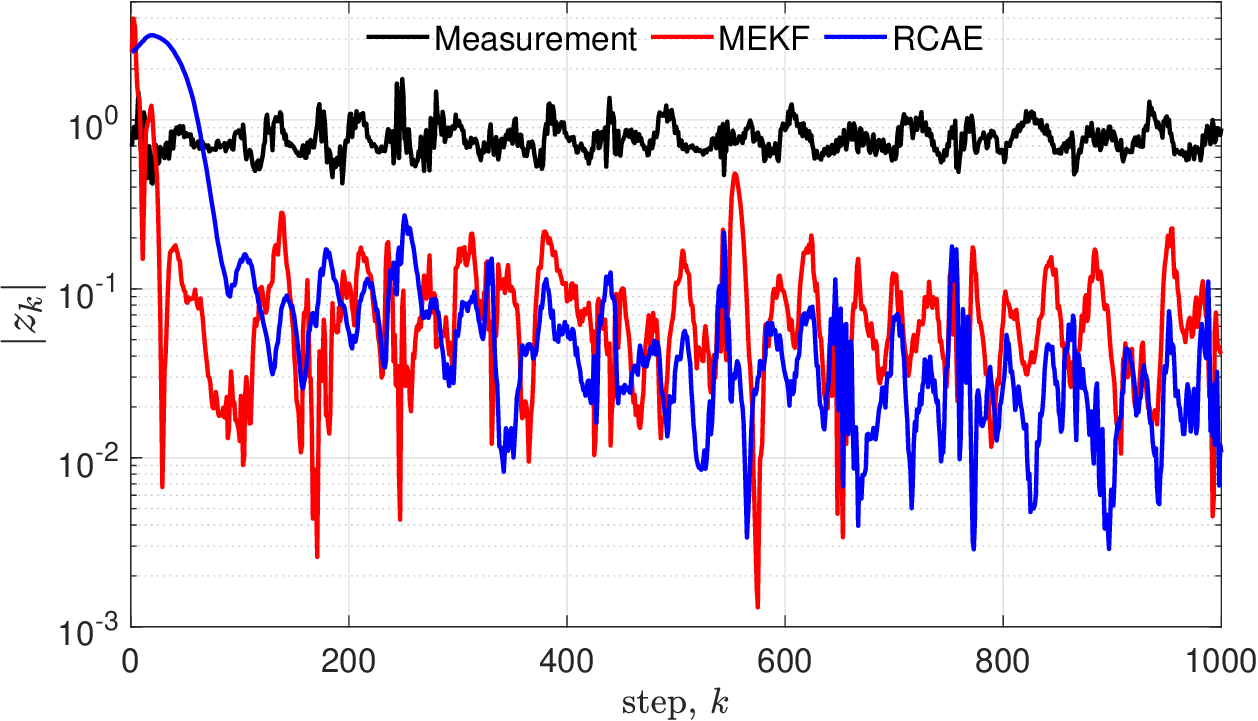}
    \caption{
    Attitude error obtained with RCAE and MEKF. 
    }
    \label{fig:RCAF_MEKF_Performance_experiment}
\end{figure}

\begin{figure}[h!]
    \centering
    \includegraphics[width=\columnwidth]{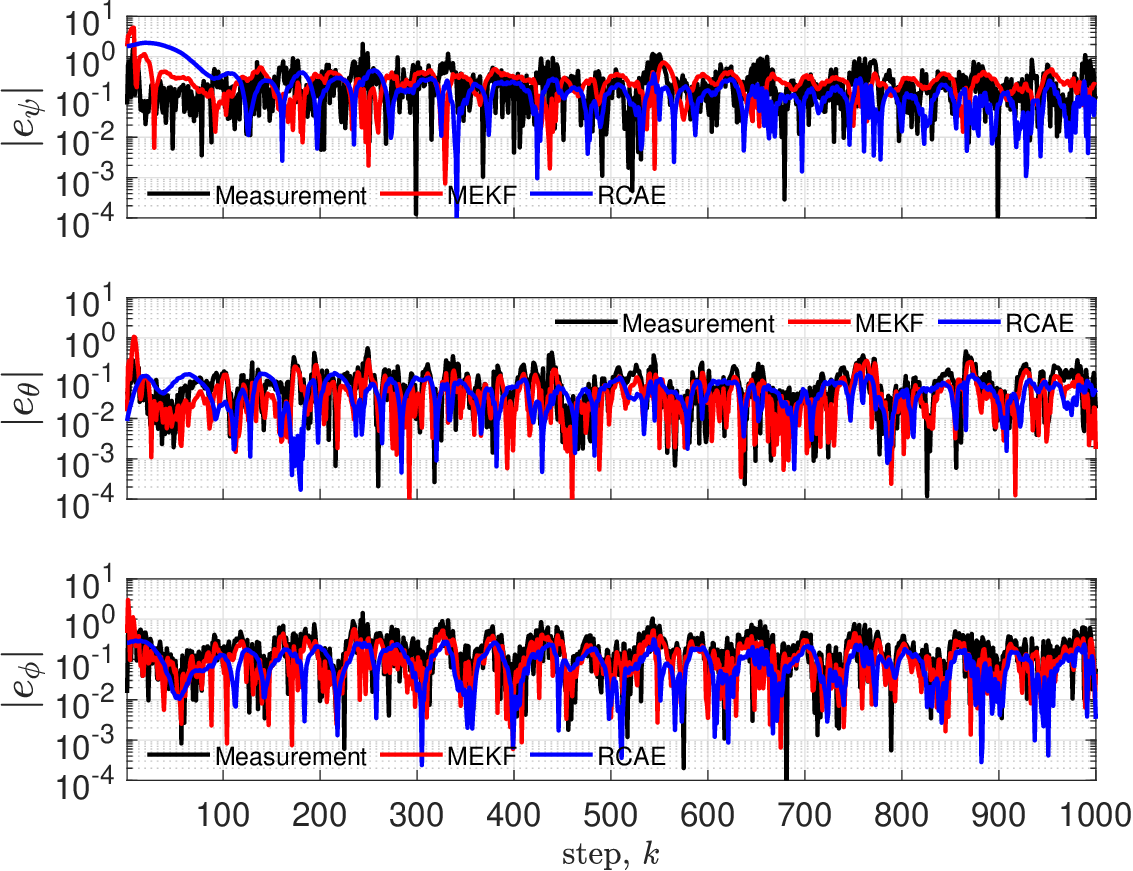}
    \caption{
    3-2-1 Euler angle errors obtained with RCAE and MEKF. 
    % Shows the absolute value of RCAF Euler angle estimation error, absolute value of the MEKF Euler angle estimation error, and the
    % absolute value of the Euler angle measurement error on a logarithmic scale.
    }
    \label{fig:Euler_Angles_RCAF_MEKF_Errors_experiment}
\end{figure}

\section{Conclusions and Future Work}
\label{sec:conclusion}

% This paper presented a novel multiplicative attitude estimator based on retrospective cost optimization. 
% The estimator, called the Retrospective Cost Attitude Estimator (RCAE), is developed for the attitude's $3\times 3$ orientation matrix parameterization.
% Unlike the Kalman filter-based estimation techniques, the RCAE uses an RLS-based optimization algorithm that uses only the measured data and does not require the computation of Jacobians and propagation of covariance matrices.
% % 
% The proposed estimator is validated in a numerical experiment, where it successfully rejects an unknown gyro bias without explicitly estimating it.
% % 
% The future work is focused on
% 1) extension of RCAE to quaternion parameterization, 
% 2) experimental implementation and verification of the retrospective cost attitude estimator in real-world applications, and  
% 3) performance comparison of RCAE with Kalman-filter-based techniques.   

This paper presented a novel learning-based attitude estimator with multiplicative correction, leveraging retrospective cost optimization. The estimator, termed the Retrospective Cost Attitude Estimator (RCAE), is specifically developed for the $3\times 3$ orientation matrix parameterization of attitude. Unlike traditional Kalman filter-based estimation techniques, RCAE employs a Recursive Least Squares (RLS)-based optimization algorithm that uses only the measured data to learn the appropriate estimator gains, eliminating the need for Jacobian computations and covariance matrix propagation. The efficacy of the proposed estimator is validated through comprehensive numerical experiments and a physical experimental setup. RCAE demonstrates robust performance, accurately estimating attitude while effectively rejecting unknown gyro bias without the need for explicit estimation. 

Future work will focus on extending RCAE to quaternion parameterization,
% enhancing its applicability to different attitude representations, 
and implementing and verifying the retrospective cost attitude estimator in real-world applications such as UAV control and navigation, further demonstrating its practical utility and robustness in dynamic environments.
% By focusing on a data-driven, learning-based approach, RCAE offers a promising direction for attitude estimation, leveraging the power of modern machine learning techniques to improve estimation accuracy and computational efficiency.

% directly comparing the performance of the proposed filter with the multiplicative Kalman-filter-based methods. 

% In this work, we present a multiplicative adaptive approach for estimating attitude, which can be utilized directly with $\rm{SO}(3)$ orientation matrices or other attitude representations. This method adeptly handles the presence of unknown gyro bias and adeptly sidesteps the complexities linked to issues such as discontinuities, uniqueness problems, and the computation of extensive covariance matrices.

\printbibliography

\begin{appendices}
\label{appnd:AM}
\section{Attitude Measurement}

% \appendix
% \section{Attitude Measurement}

The orientation matrix corresponding to the attitude of $\SB$ can be constructed using direct onboard measurements.
Several methods exist to construct the orientation matrix from measured data. 
For example, vision-based sensors can be used to compute an orientation measurement as described in \cite{shabayek2012vision, thurrowgood2009vision, kessler2010vision}, or an accelerometer and the magnetometer measurements from an IMU can be used to compute an orientation measurement as described below.

We use the following fact to compute the orientation using the measurements from an IMU.

\begin{fact}
    \label{fact:coordinates}
    Let $\rm F_A$ and $\rm F_B$ be two frames. 
    Let $\vect x$ be a vector. 
    Let $\resolve{\vect x}{A} $ and $\resolve{\vect x}{B} $ denote the coordinates of $\vect x$ in frames $\rm F_A$ and $\rm F_B,$ respectively. 
    Then, 
    \begin{align}
        \resolve{\vect x}{B} 
            =
                \SO_{\rm B/A} \resolve{\vect x}{A},
        \label{eq:coordinates}
    \end{align}
    where $\SO_{\rm B/A}$ is the orientation matrix. 
    Furthermore, it follows from \eqref{eq:coordinates} that
\begin{align}
    \SO_{\rm B/A}
        % &=
        %     \matl 
        %         \resolve{\ihat B}{A} &
        %         \resolve{\jhat B}{A} &
        %         \resolve{\khat B}{A} 
        %     \matr^\rmT
        % \\
        &=
            \matl 
                \resolve{\ihat A}{B} &
                \resolve{\jhat A}{B} &
                \resolve{\khat A}{B} 
            \matr.
    \label{eq:OBA_def}
\end{align}
\end{fact}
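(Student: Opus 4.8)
The plan is to derive the coordinate-transformation identity \eqref{eq:coordinates} directly from the definitions of an orthonormal frame and of the orientation matrix, and then to read off \eqref{eq:OBA_def} by evaluating \eqref{eq:coordinates} on the basis vectors of $\rm F_A$.

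First I would fix notation: write $\resolve{\vect x}{A} = \matl a_1 & a_2 & a_3 \matr^\rmT$, so that, by the meaning of ``coordinates in $\rm F_A$,'' $\vect x = a_1 \ihat A + a_2 \jhat A + a_3 \khat A$. Since $\rm F_B = (\ihat B, \jhat B, \khat B)$ is orthonormal, the $i$-th entry of $\resolve{\vect x}{B}$ equals the inner product of $\vect x$ with the $i$-th basis vector of $\rm F_B$; for instance the first entry is $\vect x \cdot \ihat B$. Substituting the expansion of $\vect x$ and using bilinearity of the dot product gives $\vect x \cdot \ihat B = a_1(\ihat A \cdot \ihat B) + a_2(\jhat A \cdot \ihat B) + a_3(\khat A \cdot \ihat B)$, with analogous formulas for the other two entries, so $\resolve{\vect x}{B} = M \resolve{\vect x}{A}$ where $M_{ij}$ is the dot product of the $i$-th axis of $\rm F_B$ with the $j$-th axis of $\rm F_A$. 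The one step where a convention must be pinned down is the identification $M = \SO_{\rm B/A}$: the direction cosine matrix is by definition the array of pairwise cosines of the angles between the axes of the two frames, i.e. the pairwise dot products, and since the paper takes $\SO_{\rm B/A}$ to be the transpose of the DCM, its $(i,j)$ entry is precisely $M_{ij}$; hence $M = \SO_{\rm B/A}$ and \eqref{eq:coordinates} follows. I expect this bookkeeping --- keeping straight which frame sits in which slot and where the transpose lands --- to be the only real obstacle, as everything else is elementary linear algebra.

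Finally, for \eqref{eq:OBA_def} I would apply \eqref{eq:coordinates} three times. Taking $\vect x = \ihat A$, whose coordinates in its own frame are $\resolve{\ihat A}{A} = \matl 1 & 0 & 0 \matr^\rmT$, gives $\resolve{\ihat A}{B} = \SO_{\rm B/A}\matl 1 & 0 & 0 \matr^\rmT$, which is the first column of $\SO_{\rm B/A}$. Repeating with $\vect x = \jhat A$ and $\vect x = \khat A$ identifies the second and third columns with $\resolve{\jhat A}{B}$ and $\resolve{\khat A}{B}$, which is exactly the block form \eqref{eq:OBA_def}.
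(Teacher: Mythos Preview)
Your argument is correct. Note, however, that the paper states this result as a \emph{Fact} and does not supply a proof; it only remarks that \eqref{eq:OBA_def} ``follows from \eqref{eq:coordinates},'' which is precisely the column-extraction argument you give in your last paragraph. Your derivation of \eqref{eq:coordinates} via dot products with the $\rm F_B$ basis and identification of the resulting direction-cosine array with $\SO_{\rm B/A}$ is the standard textbook approach and fills in what the paper leaves unstated.
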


% Let $\rm F_A$ and $\rm F_B$ be two frames. 

% The attitude of $\SB,$ parameterized by the orientation matrix $\SO_{\rm B/A},$ can be computed using an IMU measurement, which is assumed to be rigidly fixed to $\SB$ and whose axes are aligned with $\rm F_B.$ 
% 
Let $\rm F_A$ be defined such that $\khat A$ is aligned with the direction of gravity, that is, $\vect g = g \khat A,$ and the magnetic field is in the $\ihat A-\khat A$ plane. 
Let $a \in \BBR^3$ denote the acceleration measurement. 
Assuming that the body $\SB$ is not accelerating, the acceleration measurement $a \approx \resolve{\vect g}{B},$ which implies that 
\begin{align}
\label{eq:k_A_IMU_Measurement}
    \resolve{\khat A}{B} 
        &=
        %     \frac{\resolve{\vect a}{B}}{ \|a \|}
        % =
            \frac{a}{ \|a \|}.
\end{align}
Next, Let $m \in \BBR^3$ denote the magnetic field measurements.
Since $\vect m$ is assumed to be in the $\ihat {A} - \khat {A}$ plane and, for all $\alpha, \beta \in \BBR,$
$\khat{A} \times (\alpha \ihat{A} + \beta \khat{A}) = \alpha \jhat{A},$ it follows that 
\begin{align}
\label{eq:j_A_IMU_Measurement}
    \jhat{A} \resolvedin{B}
    % \resolve{\jhat A}{B}
        =
            \khat{A} \resolvedin{B} \times \frac{m }{\|m \|}
        =
            \frac{a}{\|a\|} \times \frac{m }{\|m \|}
\end{align}
and  
\begin{align}
\label{eq:i_A_IMU_Measurement}
    \ihat{A} \resolvedin{B}
        =
            \khat{A} \resolvedin{B} \times \jhat{A} \resolvedin{B}
        =
            \frac{a}{\|a\|} \times
            \left(
                \frac{a}{\|a\|} \times \frac{m }{\|m \|}
            \right).
\end{align}
The orientation matrix $\SO_{\rm B/A}$ is finally given by \eqref{eq:OBA_def} using 
$\ihat{A} \resolvedin{B},$ 
$\jhat{A} \resolvedin{B},$ and
$\khat{A} \resolvedin{B}$ computed above.

\end{appendices}
\end{document}